\documentclass[12pt]{article}
\usepackage{graphicx,amsmath,amssymb,amsthm,rotating,stmaryrd,datetime,pifont,booktabs,enumitem}
\usepackage[mathscr]{eucal}

\usepackage{setspace}
\usepackage{enumitem}
\setlist{nolistsep}
\setlist{nosep}

\usepackage[usenames,dvipsnames]{color}
\newcommand\red[1]{{\color{red} #1}}

\newtheorem{theorem}{Theorem}

\newtheorem{result}[theorem]{Result}
\newtheorem{lemma}[theorem]{Lemma}

\newtheorem{construction}[theorem]{Construction}
\newtheorem{conjecture}[theorem]{Conjecture}

\newcommand{\new}{\nu}
\newcommand{\f}{f}
\newcommand{\g}{g}

\renewcommand{\O}{\mathcal O}
\newcommand{\A}{\mathcal A}

\renewcommand{\H}{\mathcal{H}}
\renewcommand{\P}{\mathcal{P}}
\renewcommand{\S}{\mathcal{S}}

\newcommand{\R}{\mathcal{R}}
\newcommand{\C}{\mathcal{C}}
\newcommand{\D}{\mathcal{D}}

\newcommand{\li}{\ell_\infty}
\newcommand{\si}{\Sigma_\infty}

\newcommand{\B}{{\mathbf{B}}}

\newcommand{\F}{\mathcal{F}}
\newcommand{\PG}{{\textup{PG}}}
\newcommand{\AG}{{\textup{AG}}}

\newcommand{\Fqt}{\mathbb{F}_{\hspace*{-1mm}{q^{t}}}}
\newcommand{\Fq}{\mathbb{F}_{\hspace*{-.5mm}q}}

\newcommand{\Fqts}{\mathbb{F}^*_{\hspace*{-0.5mm}{q^{t}}}}
\newcommand{\hyp}{\frac{q^t-1}{q-1}}

\DeclareMathOperator{\Nm}{N}

\begin{document}

\title{Inherited arcs in Andr\'e planes of even order
}

\author{S.G. Barwick, Alice M.W. Hui and Wen-Ai Jackson}
\date{}
\maketitle

AMS code: 51E20

Keywords: projective geometry, conics, inherited arcs, Andr\'e replacements, Andr\'e planes


\begin{abstract} Andr\'e planes of order $q^t$ can be constructed from the Desarguesian plane  $\PG(2,q^t)$ using a process that involves replacing Andr\'e nets. This article constructs ovals in  Andr\'e planes of order $q^t$, $q$ even and $t$ prime that are inherited from conics of the  Desarguesian plane  $\PG(2,q^t)$. Further, it is shown that certain conics do not inherit to arcs under the process of Andr\'e replacement.
\end{abstract}

\section{Introduction}

The process of deriving the Desarguesian plane  $\PG(2,q^2)$  replaces certain lines with Baer subplanes, and constructs the Hall plane. A well studied problem involves taking  a conic in $\PG(2,q^2)$ and using the process of derivation to construct inherited arcs in the Hall plane. The effect of derivation on conics of $\PG(2,q^2)$ has been completely determined, with contributions by a number of authors, see \cite{barmar}, \cite{BKNS}, \cite{Cher}, \cite{GlynnStein}, \cite{Korch1}, \cite{Korch2}, \cite{OKeefePasPen}, \cite{OKeefePas}, \cite{Szon}.
In this article we are interested in the case when $q$ is even, and the complete results when $q$ is even are as follows.

\begin{result}\label{120e}
Let $\C$ be a non-degenerate conic in $\PG(2,q^2)$, $q$ even, $q>2$, and $\D$ a Baer subline of $\li$. Derive $\PG(2,q^2)$ using $\D$ to get the Hall plane $\H(\D)$. The affine points of $\C$ correspond to a set of affine points of $\H(\D)$ denoted by $\D(\C)$.
\begin{enumerate}
\item \cite{Cher,OKeefePas} If $\C$ is secant to $\li$, then $\D(\C)$ is not an arc of the Hall plane.
\item \cite{Cher} If $\C$ is exterior to $\li$, then $\D(\C)$ is not an arc of the Hall plane.
\item Suppose $\C$ is tangent to $\li$ at the point $T$ and $\C$ has nucleus $N$.
\begin{enumerate}
\item \cite{OKeefePas} If $T,N\in\D$, then $\D(\C)$ is not an arc of the Hall plane.
\item \cite{OKeefePasPen} If exactly one of $T,N$ lies in $\D$ then $\D(\C)$ is an arc of the Hall plane.
\item \cite{GlynnStein} If $T,N\notin\D$, then $\D(\C)$ is an arc of the Hall plane if and only if $q$ is a square and $T,N$ are
 conjugate with respect to $\D$.
\end{enumerate}
\end{enumerate}
\end{result}

The process of deriving $\PG(2,q^2)$ using a Baer subline of $
\li$  to get the Hall plane can be generalised to performing an Andr\'e replacement of $\PG(2,q^t)$, $t\geq3$, using an Andr\'e set of $\li$ to get an Andr\'e plane of order $q^t$.
This process of Andr\'e replacement is described in Section \ref{sec2}. Note that given a Baer subline of $\li$ in $\PG(2,q^2)$, there is a unique way to derive $\PG(2,q^2)$ to get the Hall plane. In $\PG(2,q^t)$, $t\geq3$,  given an Andr\'e set of $\li$, there are $t-1$ different Andr\'e replacement nets, leading to $t-1$ different ways to perform an Andr\'e replacement and construct an associated Andr\'e plane. This means that when studying whether conics inherit to arcs when $t\geq3$, we need to consider all of the $t-1$  associated Andr\'e planes.

The main results of  this article are   Theorems~\ref{thm000},  \ref{201}, and  \ref{014}  which  generalise Result~\ref{120e} to partially answer when conics in $\PG(2,q^t)$, $q$ even, $t\geq3$ prime, are inherited to arcs in Andr\'e planes.
In particular,  Theorem~\ref{thm000} generalises  parts 1 and 2 of Result \ref{120e} to show that conics of $\PG(2,q^t)$, $q$ even, $t$ a prime, that are secant or exterior to $\li$ never inherit to arcs of an Andr\'e plane. When $\C$ is tangent to $\li$ there are several cases to consider. Theorem  \ref{201} generalises part 3a) of Result \ref{120e}  to $\PG(2,q^t)$, $q$ even, $t$ a prime, and we never get an inherited arc in this case. Theorem \ref{014} generalises part 3c)  of Result \ref{120e} to give an example of an inherited arc -- the result for this case is more complex when $t\geq3$  than    the $t=2$ case, and the generalisation is not straightforward.
This leaves several   open cases  which are discussed in Section~\ref{sec-con}, in particular the case of an inherited arc given in Result \ref{120e}(3b) when $t=2$ is conjectured to not give an inherited arc when $t\geq3$.

\section{Background}\label{sec2}

The finite field of prime power order $q$ is denoted $\Fq$, and we let $\Fq^*=\Fq\setminus\{0\}$. The \emph{norm mapping} from $\Fqt$ to $\Fq$ is denoted $\Nm (k)=k^{q^{t-1}+\cdots+q+1}$  for $k\in\Fqt$.

The Bruck-Bose representation \cite{andr54,bruc69,bruc64,segre} of $\PG(2,q^t)$ in $\PG(2t,q)$  is well known. Let $\si$ be the hyperplane at infinity in $\PG(2t,q)$ and let $\S$ be a   $(t-1)$-spread in $\si$. The incidence structure $\A(\S)$ with points the points of $\PG(2t,q)\setminus\si$; lines the $t$-spaces of $\PG(2t,q)\setminus\si$ that contain an element of $\S$; and incidence being inclusion, is an affine plane. We can complete this to a projective plane denoted $\P(\S)$: points on the line at infinity $\li$ correspond to the elements of $\S$. Moreover, $\P(\S)\cong \PG(2,q^t)$ iff $\S$ is a regular (Desarguesian) spread.
If $\mathcal X$ is a set of points of  $\PG(2,q^t)$, then we denote the corresponding set of points in $\PG(2t,q)$ by $[\mathcal X]$.

The representation of $\li\cong\PG(1,q^t)$ as a regular $(t-1)$-spread in $\si\cong\PG(2t-1,q)$ is often referred to as  field reduction, and denoted by $F_{2,t,q}$. Coordinates for this will be denoted as follows.  If $P=(x,y)\in\PG(1,q^t)$, then the corresponding $(t-1)$-space in $\PG(2t-1,q)$ is denoted $[P]=F_{2,t,q}(P)$ and has coordinates written as $[P]=\{\langle (sx,sy)\rangle_q\,|\,s\in\Fqt^*\}$.

   An Andr\'e set of $\PG(1,q^t)$, $t\geq3$, can be  constructed  as follows.
Let $E,F$ be two distinct points of $\PG(1,q^t)$ and let $G$ be the collineation group acting on points of $\PG(1,q^t)$ that fixes $E$ and $F$ pointwise. As $G$ is Singer group of order $q^t-1$, $G$ has a unique Singer subgroup $H$ of order $q^{t-1}+\cdots+q+1$. The orbits of $H$ partition the  points of $\PG(1,q^t)\setminus\{E,F\}$ into $q-1$ sets  of size $q^{t-1}+\cdots+q+1$, 
these sets are called \emph{Andr\'e sets} and $E,F$ are called 
 the \emph{transversal points} of the Andr\'e set.
 Andr\'e sets are defined in multiple settings, and a short discussion of the different settings and different nomenclature is given in \cite{qoddpaper}. In particular, in the linear set literature, Andr\'e sets are scattered $\Fq$-linear sets of $\PG(1,q^t)$ of pseudoregulus type. 
We note that an Andr\'e set of $\PG(1,q^t)$, $t\geq3$, is projectively equivalent to $$
\A_1=\{(1,\theta)\,|\,k\in\Fqt^*,\Nm(\theta)=1\}$$ which has transversal points $(1,0)$, $(0,1)$.
 Moreover the $q-1$ Andr\'e sets with transversal points $(1,0)$, $(0,1)$ are the sets $\A_\delta$, $\delta\in\Fq^*$ with  coordinates given by
$$
\A_\delta=\{(1,\theta)\,|\,k\in\Fqt^*,\Nm (\theta)=\delta\}.
$$

Under field reduction, the Andr\'e set $
\A_\delta=\{(1,\theta)\,|\,k\in\Fqt^*,\Nm (\theta)=\delta\}
$ of $\PG(1,q^t)$ corresponds to a set  $[\A_\delta]$ of $\hyp$ elements of the regular (Desarguesian) $(t-1)$-spread $\S$ in $\PG(2t-1,q)$.
This set is  called  an \emph{Andr\'e net} and has coordinates:
$$ [\A_\delta] =\{[P_k]=\{\langle(s,sk)\rangle_q\,|\, s\in\Fqt^*\}\ \,|\,k\in\Fqt^*, \Nm(k)=\delta\}.$$
In $\PG(2t-1,q)$, there are $t-1$ \emph{Andr\'e replacement nets} for $[\A_\delta]$, denoted $\A_\delta^m$, $m=1,\ldots,{t-1}$. Coordinates for these nets are given in multiple places, see \cite{qoddpaper} for a short survey and for a discussion on why we only consider  the case $t$ prime.
The coordinates for the Andr\'e replacement nets are given by the following:
\begin{equation}
\label{e2}
\A_\delta^m=\{X_{m,u}=\{\langle(s,s^{q^m}u)\rangle_q\,|\, s\in\Fqt^*\}\ \,|\,u\in\Fqt^*, \Nm(u)=\delta\}.
\end{equation}

We can form a new spread of $\PG(2t-1,q)$ from the regular $(t-1)$-spread $\S$, namely the  spread $(\S\setminus [\A_\delta])\cup\A_\delta^m$ for some $m\in\{1,\ldots,t-1\}$.
Recall that as $\S$ is regular, the Bruck-Bose plane $\P(\S)\cong \PG(2,q^t)$ is Desarguesian. The plane $\P\big((\S\setminus [\A_\delta])\cup\A_\delta^m\big)$ is   an Andr\'e plane,  we abbreviate the notation and write  $\P(\A_\delta^m)=\P\big((\S\setminus [\A_\delta])\cup\A_\delta^m\big)$. The process of constructing $\P(\A_\delta^m)$ from $\P(\S)\cong \PG(2,q^t)$ is called \emph{Andr\'e replacement}, and the next statement carefully describes it (using a generic notation for the Andr\'e set).

\begin{construction}\label{cons1} {\bfseries \emph{(Andr\'e replacement)}} In $\PG(2,q^t)$, $t$ prime, $t\geq 3$, let $\D$ be an Andr\'e set of $\li$. In the $\PG(2t,q)$ Bruck-Bose setting, $[\D]$ is an Andr\'e net of the regular spread $\S\subset\si$,   denote the $t-1$ Andr\'e replacement nets by $\D^m$, $m=1,\ldots,t-1$.
 For $m\in\{1,\ldots,t-1\}$, define  the incidence structure $\mathcal T_m$ as follows.
\begin{itemize}
\item The points of $\mathcal T_m$ are the
 points of $\AG(2,q^t)$
 \item The lines of $\mathcal T_m$ consist of
 \begin{itemize}
\item  the lines of $\AG(2,q^t)$ that meet $\li$ in a point not in $\D$,
\item the \emph{$\D^m$-blocks} (a set of affine points $\mathcal X$ in $\PG(2,q^t)$ is called an \emph{$\D^m$-block} if in $\PG(2t,q)$, the corresponding set $[\mathcal X]$ is an affine $t$-space whose projective completion  meets $\si$ in a $(t-1)$-space that lies in the replacement net $\D^m$).
\end{itemize}
\item Incidence is inclusion
\end{itemize}
Then $\mathcal T_m$ is an affine plane of order $q^t$ whose projective completion is an Andr\'e plane denoted $\P(\D^m)$. This process is called an \emph{Andr\'e replacement with respect to the Andr\'e replacement net $\D^m$. }
\end{construction}

 Recall that a $(t-1)$-regulus in $\PG(2t-1,q)$ is a set $\R$ of $q+1$ pairwise disjoint $(t-1)$-spaces, with the property that if a line meets three elements of $\R$, then it meets all elements of $\R$. Hence a  $(t-1)$-regulus is  ruled by a set of $q^{t-1}+\cdots+q+1$ (pairwise disjoint) lines.
It is well known that if $b$ is an $\Fq$-line contained in $\li$ in $\PG(2,q^t)$, then in the $\PG(2t,q)$ Bruck-Bose representation, $[b]$ is a $(t-1)$-regulus contained in the regular spread $\S$. Conversely, every $(t-1)$-regulus contained in $\S$ corresponds to an $\Fq$-line of $\li$.

When $t$ is prime, the $\Fq$-lines contained in an Andr\'e set  $\D$ can be partitioned into $t-1$ families. We need the following relationship between the $t-1$ families of $\Fq$-lines and the $t-1$ Andr\'e replacement nets for $\D$; this key result motivates why we only consider the case when $t$ is prime.

\begin{result}\cite{qoddpaper} \label{M110}
Let $\D$ be an Andr\'e set of $\PG(1,q^t)$, $t\geq3$ prime, $q\geq t$.
\begin{enumerate}
\item\label{M110a} The $\Fq$-lines contained in $\D$ can be partitioned into $t-1$ families of size $(q^{t-1}+\ldots+q+1)(q^{t-1}-1)/(q^2-1)$. We can label the families by $\F_1,\ldots,\F_{t-1}$ so that for $i\in\{1,\ldots,t-1\}$, family $\F_i$ corresponds to
the  Andr\'e replacement net $\D^i$  in the  following way.
\begin{enumerate}
\item If $\ell$ is a line contained in one of  the $(t-1)$-spaces in  $\D^i$, then the set $\{P\in\li\,|\,[P]\cap\ell\neq\emptyset\}$  is an $\Fq$-line belonging to family $\F_i$.
\item  If
$b$ is an $\Fq$-line in family $\F_i$, then  every ruling line of the $(t-1)$-regulus $[b]$ is contained in a unique
 $(t-1)$-space of $\D^i$.
 \end{enumerate}

\item\label{M110b}  Any two distinct points of $\D$ lie in exactly $t-1$ $\Fq$-lines of $\D$, one from each family.
 \end{enumerate}
  \end{result}

These families of $\Fq$-lines are critical to studying inherited arcs due to the following relationship with the   $\D^m$-blocks defined in Construction~\ref{cons1}.

\begin{result}\cite{qoddpaper}\label{001}
In $\PG(2,q^t)$,  $q\geq t$, $t\geq3$ prime, let $\D$ be an Andr\'e set of $\li$, and denote the families of $\Fq$-lines contained in $\D$ by $\F_1,\ldots,\F_{t-1}$.  Three non-collinear affine points $A,B,C$ lie in a common  $\D^m$-block for some $m\in\{1,\ldots,t-1\}$ if and only if the $\Fq$-line determined by the three points $AB\cap\li$, $AC\cap\li$, $BC\cap\li$  belongs to family $\F_m$.
\end{result}



\section{Secant and exterior conics}

In this section we look at a conic which is either secant or exterior to $\li$ in $\PG(2,q^t)$, $q$ even, and show that an Andr\'e replacement never results in an inherited arc.

The article
\cite{BKNS} showed how powerful the following result is in the case when $t=2$, that is, when deriving $\PG(2,q^2)$ and looking at which conics inherit to arcs of the Hall plane. It is also powerful when $t\geq3$ and can be used to look at the cases when a conic is secant or  exterior to $\li$.

\begin{result}\label{result000} \cite{segrekorch}
In $\PG(2,q)$, $q$ even, let $\C$ be a conic and $\ell$  a line that  is not tangent to $\C$. For any triple $\{P_1 , P_2 , P_3 \}$ of points contained in $\ell\setminus\C$, there is exactly one triangle $\{A_1,A_2,A_3\}$ inscribed in $\C \setminus \ell$ such that $A_i A_j \cap \ell = P_k$, where $i, j, k$ is a permutation of $1, 2, 3$.
\end{result}

\begin{theorem}\label{thm000}  In $\PG(2,q^t)$, $q$ even, $q\geq t$, $t\geq3$ prime, let $\C$ be a conic and $\D$ be an Andr\'e set of $\li$. Perform an Andr\'e replacement using the Andr\'e replacement net $\D^m$, and let $\D_m(\C)$ denote the set of points in $\P(\D^m)$ corresponding to the affine points of $\C$.    If   $\C$ is a conic that is either exterior or secant to $\li$, then $\D_m(\C)$ is not an arc in the Andr\'e plane $\P(\D^m)$ for any $m\in\{1,\dots,t-1\}$.
\end{theorem}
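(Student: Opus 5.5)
The plan is to find three affine points of $\C$ that lie in a common $\D^m$-block of $\P(\D^m)$. Such a triple shows that $\D_m(\C)$ contains three collinear points, so it is not an arc. Result~\ref{001} says that three non-collinear affine points $A,B,C$ lie in a common $\D^m$-block exactly when the three points $AB\cap\li$, $AC\cap\li$, $BC\cap\li$ lie in $\D$ and determine an $\Fq$-line of family $\F_m$. The whole problem therefore reduces to producing an inscribed triangle of $\C$ whose sides meet $\li$ in three points of a prescribed $\Fq$-line of family $\F_m$.

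The first step is to choose the three points on $\li$. Since $\C$ is secant or exterior, $|\C\cap\li|\le 2$, while $\D$ has $q^{t-1}+\cdots+q+1$ points. Take any $\Fq$-line $b\in\F_m$; Result~\ref{M110} guarantees $\F_m$ is nonempty. The line $b$ has $q+1\ge 3$ points, since $q\ge 2$. We need three points of $b$ that are not on $\C$. If $b$ contains points of $\C\cap\li$, we use Result~\ref{M110}(\ref{M110b}) to move to another line of $\F_m$. Alternatively, we count: $b$ has $q+1$ points and at most $2$ of them lie on $\C$, so for $q\ge 4$ at least three remain. Here $q\ge t\ge 3$ and $q$ is even, so $q\ge 4$. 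Hence we may pick distinct $P_1,P_2,P_3\in b\setminus\C$.

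The second step applies Result~\ref{result000} in $\PG(2,q^t)$ (of even order) with $\ell=\li$. This is legitimate because $\li$ is not tangent to $\C$. It gives a triangle $A_1,A_2,A_3$ inscribed in $\C\setminus\li$ with $A_iA_j\cap\li=P_k$. The points $A_i$ are affine, and they are non-collinear since they are distinct points of a conic. The three points $P_k$ are distinct and lie in $b\in\F_m$. The $\Fq$-line they determine is $b$, because an $\Fq$-line is determined by any three of its points (three points of $\PG(1,q^t)$ lie in a unique $\Fq$-subline).

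The third step invokes Result~\ref{001}: $A_1,A_2,A_3$ lie in a common $\D^m$-block, which is a line of $\P(\D^m)$. So $\D_m(\C)$ has three collinear points and is not an arc. Since $m$ was arbitrary, this holds for every $m$. The main point needing care is the choice of three points of $b$ off $\C$, and checking that the hypothesis $q\ge t$ forces $q\ge4$, so the simple count suffices. I expect no real obstacle beyond this.
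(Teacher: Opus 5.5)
Your proposal is correct and follows essentially the same argument as the paper: pick an $\Fq$-line $b\in\F_m$, choose three points of $b$ not on $\C$, apply Result~\ref{result000} with $\ell=\li$ to get an inscribed triangle whose sides meet $\li$ in those three points, and conclude from Result~\ref{001} that the triangle lies in a common $\D^m$-block. Your explicit count (at most two points of $b$ lie on $\C$, and $q\geq 4$ because $q$ is even with $q\geq t\geq 3$) justifies a choice the paper makes without comment. Your labelling $A_iA_j\cap\li=P_k$ matches Result~\ref{result000}, whereas the paper's proof writes $P_j$, which is a typo.
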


\begin{proof} Let $\C$ be a conic that is exterior of secant to $\li$.
Let $m\in\{1,\ldots,t-1\}$ and let $b$ be an $\Fq$-line in  $\D$ belonging to family $\F_m$. Let $P_1,P_2,P_3$ be three points in $b$ which do not lie in $\C$. By Result \ref{result000}, there are points $A_1,A_2,A_3 \in \C$ such that $A_iA_j \cap l_\infty = P_j$ for any $\{i,j,k\}=\{1,2,3\}$. By Result~\ref{001}, the points $A_1,A_2,A_3$ lie in a $\D^m$-block. This $\D^m$-block corresponds to a line in $\P(\D^m)$ that meets $\D_m(\C)$ in three points, so $\D_m(\C)$ is not an arc.
\end{proof}

\section{Tangent conics}

In this section we look at a conic  in $\PG(2,q^t)$ which is tangent to $\li$ and try to determine whether an Andr\'e replacement leads to an inherited arc.

Let $\C$ be a conic  in $\PG(2,q^t)$  tangent  to  $\li$ at the point $T$ with nucleus $N$, and let
$\D$ be  an Andr\'e set with transversal points $E,F$. There are a number of cases to consider, depending on the intersection of $\C\cup N$ and the set $\D\cup \{E,F\}$.  In this section we consider the following two cases.

Firstly, Section \ref{tgt-sec1} looks at the case when both $T,N\in\D$ and shows that in this case we never get an inherited arc -- this directly generalises Result \ref{120e}(3a).

Section \ref{tgt-sec2} looks at the case when $\{T,N\}=\{E,F\}$, (that is, where $\{T,N\}$ are the transversal points of $\D$) and shows that in most cases we get an inherited arc. The main result is stated in Theorem~\ref{014} -- this result generalises Result \ref{120e}(3c), although the generalisation is not an obvious one.

\subsection{Tangent conics - the case where  $T,N\in\D$}\label{tgt-sec1}

\begin{theorem}\label{201} In $\PG(2,q^t)$, $q$ even, $q\geq t$, $t\geq3$ prime, let $\C$ be a conic and $\D$ be an Andr\'e set of $\li$. Perform an Andr\'e replacement using the Andr\'e replacement net $\D^m$, and let $\D_m(\C)$ denote the set of points in $\P(\D^m)$ corresponding to the affine points of $\C$.   If  $\C$ is a conic with nucleus $N$ and tangent to the line $\li$ at the point $T$,   with $T,N\in\D$, then $\D_m(\C)$ is not an arc in the Andr\'e plane $\P(\D^m)$ for any $m\in\{1,\dots,t-1\}$.
\end{theorem}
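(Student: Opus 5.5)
By Result~\ref{001}, it suffices to exhibit three affine points of $\C$ whose three joining lines meet $\li$ in three points of one $\Fq$-line $b\subseteq\D$ with $b\in\F_m$. Those three points then lie in a common $\D^m$-block, which is a line of $\P(\D^m)$ meeting $\D_m(\C)$ in three points. I will follow the same pattern as the proof of Theorem~\ref{thm000}. Since $T,N\in\D$ are distinct, Result~\ref{M110}(\ref{M110b}) gives a unique $\Fq$-line $b\in\F_m$ with $T,N\in b\subseteq\D$, for each $m$. The plan is to find three points $P_1,P_2,P_3\in b\setminus\{T,N\}$ and an inscribed triangle $A_1A_2A_3$ of $\C$ with $A_iA_j\cap\li=P_k$.

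The substitute for Result~\ref{result000} in the tangent case is a direct computation. Take $\C$ to be $y=x^2$ in affine coordinates, so $\C$ is tangent to $\li$ at $T=(0,1,0)$ and has nucleus $N=(1,0,0)$ (as $q$ is even). The affine points are $(a,a^2)$. The chord through $(a,a^2)$ and $(c,c^2)$ has slope $a+c$, so it meets $\li$ at the point with slope $a+c$, and distinct chords meet $\li$ at points other than $T,N$ whenever $a\ne c$. A triangle $a_1,a_2,a_3$ therefore gives the three slopes $a_1+a_2$, $a_2+a_3$, $a_1+a_3$, and these sum to $0$. Conversely, given distinct nonzero $s_1,s_2,s_3$ with $s_1+s_2+s_3=0$, choose any $a_1$ and set $a_2=a_1+s_3$, $a_3=a_1+s_2$. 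The $a_i$ are then distinct and realise the slopes. So the problem reduces to the following: find three distinct points of $b\setminus\{T,N\}$ whose slopes (in the coordinate where $T=\infty$ and $N=0$) sum to zero.

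Next I describe $b$ in these coordinates. In the slope coordinate $\PG(1,q^t)$, $b$ is an $\Fq$-line through $\infty$ and $0$, so $b=\{\lambda w:\lambda\in\Fq\}\cup\{\infty\}$ for some $w\in\Fqt^*$. Any $\Fq$-subline through $0$ and $\infty$ has this form: apply a projectivity fixing $0$ and $\infty$ to the standard one, and it becomes multiplication by a scalar. The finite points of $b$ other than $N$ are then the points $\lambda w$ with $\lambda\in\Fq^*$. For $q\ge4$, choose distinct $\lambda_1,\lambda_2\in\Fq^*$ with $\lambda_1\neq\lambda_2$, and set $\lambda_3=\lambda_1+\lambda_2$. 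This is nonzero and distinct from both, and $\lambda_1w+\lambda_2w+\lambda_3w=0$, as required. The hypothesis $q\ge t\ge3$ with $q$ even gives $q\ge4$, so such $\lambda_i$ exist.

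The main obstacle is making sure that the coordinate normalisation is compatible with $\D$. Any conic tangent to $\li$ at $T$ with nucleus $N$ is equivalent to $y=x^2$ under a collineation of $\PG(2,q^t)$ fixing $\li$, and I only need $b$ to be some $\Fq$-line containing $T$ and $N$ inside $\D$ from family $\F_m$. That line is supplied intrinsically by Result~\ref{M110}, and the family-membership criterion in Result~\ref{001} is intrinsic as well. So no explicit coordinates for $\D$ are needed, only that $b$ is an $\Fq$-subline containing the images of $T$ and $N$, which I will check is preserved by the normalising collineation. Applying Result~\ref{001} to $A_1,A_2,A_3$ then completes the proof.
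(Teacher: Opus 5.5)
Your proof is correct, but it takes a different route from the paper's.

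\textbf{The paper's route.} The argument there is synthetic. It picks $X\in b\setminus\{T,N\}$ and a secant of $\C$ through $X$ meeting $\C$ in $A,B$. It then takes the unique $\Fq$-conic $\O\subseteq\C$ through $T,A,B$ and its $\Fq$-plane $\alpha$, which is unique because $q\ge4$. Since $T\in\O$, the nucleus of $\O$ is $N$, so $N\in\alpha$ and hence $\alpha\cap\li=b$. Any fourth point $C\in\O$ then gives the required triangle $A,B,C$.

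\textbf{Your route.} You normalise $\C$ to $y=x^2$ by a collineation fixing $\li$. You then reduce the problem, exactly as the paper does in Lemma~\ref{002}, to finding three distinct nonzero slopes in $b$ that sum to $0$. The explicit form $b=\Fq w\cup\{\infty\}$ is additively closed, so the choice $\lambda_1 w,\lambda_2 w,(\lambda_1+\lambda_2)w$ is immediate.

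\textbf{Comparison.} The two constructions produce the same kind of configuration: your points $a_1+\mu w$, $\mu\in\Fq$, together with $T$, form an $\Fq$-subconic of $\C$ through $T$. What your version buys:
\begin{itemize}
\item It avoids the facts about $\Fq$-conics and $\Fq$-planes that the paper uses: uniqueness through three points, the unique $\Fq$-plane for $q\ge4$, and the shared nucleus.
\item It makes the contrast with Section~\ref{tgt-sec2} transparent. When $T,N\in\D$, the $\F_m$-line through $T,N$ is an additive subgroup in slope coordinates, so a zero-sum triple always exists. In the transversal case, $\D$ is a multiplicative coset, and the divisibility condition of Lemma~\ref{003} becomes decisive.
\end{itemize}
The paper's route, in turn, is coordinate-free and needs no normalisation.

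\textbf{Checks on your details.} Your handling of the normalisation is sound. It suffices to pull the triangle back along the collineation, since collineations preserve $\Fq$-sublines, and Result~\ref{001} is then applied only in the original plane. The description of $b$ is most cleanly justified as follows: $\{\lambda w:\lambda\in\Fq\}\cup\{\infty\}$ is an $\Fq$-subline through $0,\infty,w$, and three points determine a unique $\Fq$-subline. This avoids needing a projectivity that fixes both $0$ and $\infty$.
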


\begin{proof}
Let $m\in\{1,\ldots,t-1\}$.
By Result~\ref{M110},
there is a unique $\Fq$-line $b$ contained in family $\F_m$ that contains the two points $T,N$. Let $X$ be a point of $b$ distinct from $T,N$ and let $\ell$ be a $2$-secant of $\C$ through the point $X$, with $\ell\cap\C=\{A,B\}$.
The term  $\Fq$-conic contained in $\C$ refers to a set of $q+1$ points of $\C$ that are projectively equivalent to a non-degenerate conic contained in $\PG(2,q)$. As $\C$ is projectively equivalent to the set $\{(\theta^2,\theta,1)\,|\,\theta\in\Fqt\cup\{\infty\}\}$, it follows that any three points of $\C$ are contained in a unique $\Fq$-conic contained in $\C$.
Let $\O$ be the unique $\Fq$-conic contained in $\C$ that contains the
 three points $T,A,B$.
As $q\geq4$, $\O$ lies in a unique  $\Fq$-plane which we denote by $\alpha$.
As $T\in\O$, $\O$ has nucleus $N$, so $N\in\alpha$. Hence $\alpha$ is secant to $\li$ and so $\alpha\cap\li=b$. As $q\geq4$, $\O$ contains a point $C$ distinct from $T,A,B$. The three points $AB\cap\li$, $AC\cap\li$, $BC\cap\li$ lie in $b$. It follows from Result~\ref{001} that $A,B,C$ lie in a common $\D^m$-block, and so they are collinear in $\P(\D^m)$. Hence $\D_m(\C)$ is not an arc in  $\P(\D^m)$.
\end{proof}

 \subsection{Tangent conic -  the case where   $\{T,N\}$ are the transversal points of $\D$}\label{tgt-sec2}

\subsubsection{Preliminaries}

In what follows,  let $\tau$ be a multiplicative generator for $\Fqt^*$.
Recall from Section~\ref{sec2} that the  $q-1$ Andr\'e sets
with transversal points $(1,0),(0,1)$ are projectively equivalent to
$\A_\delta=\{(1,\theta)\,|\,\theta\in\Fqts,\Nm(\theta)=\delta\}$, $\delta\in\Fq^*$. We rewrite this in a different form.
If $\theta\in\Fqts$, with $ \Nm(\theta)=\delta$, then $\theta=\new \tau^{(q-1)i}$ for some $\nu\in\Fqts$ with $\Nm(\nu)=\delta$ and  $i\in I$ where
 $$I=\{0,1,\ldots,q^{t-1}+\cdots+q\}$$ is an index set.
So the $q-1$ Andr\'e sets
with transversal points $(1,0),(0,1)$ are projectively equivalent to sets $\A_\new$, $\new\in\Fqt^*$ where
$\A_{\new}=\{(1,\new\tau^{(q-1)i})\,|\,i\in  I   \}.$

For the remainder of this section, we let $\D$ be  an Andr\'e set with transversal points $T,N$, and write
\begin{equation}\label{e5}\D=\A_{\new}=\{(1,\new\tau^{(q-1)i})\,|\,i\in  I   \} \quad\textup{for some }\new\in\Fqt^*.\end{equation}

The $t-1$ Andr\'e replacement nets for $\D=\A_\new$ are denoted  $\D^m$, $m=1,\ldots, t-1$; the coordinates can be calculated using (\ref{e2}), giving
\begin{equation}\label{e6}
\D^m=\{ X_{m,u}=\{\langle(s,s^{q^m}u)\rangle_q\,|\, s\in\Fqt^*\}\ \,|\,u\in\Fqt^*, \Nm(u)=\Nm(\new)\}.
\end{equation}

%
%

\begin{lemma}\label{002}
In $\PG(2,q^t)$, $q$ even, $t\geq3$ prime, let $\C$ be a conic with nucleus $N$ and tangent to the line $\li$ at the point $T$ with $\{T,N\}=\{(1,0,0),(0,1,0)\}$. Let
$\D$ be  an Andr\'e set with transversal points $T,N$.
\begin{enumerate}
\item If there exists three     affine points $A,B,C$ in $\C$ with $AB\cap\li$, $AC\cap\li$, $BC\cap\li$  in  $\D$, that is   $AB\cap \li=(1,\new \tau^{(q-1)i},0)$,  $AC\cap \li=(1,\new \tau^{(q-1)j},0)$, $BC\cap \li=(1,\new \tau^{(q-1)k},0)$ for distinct $i,j,k\in I $, then
$
\tau^{(q-1)i}+\tau^{(q-1)j}+\tau^{(q-1)k}=0$.
\item  If  there exists distinct $i,j,k\in I $ satisfying $
\tau^{(q-1)i}+\tau^{(q-1)j}+\tau^{(q-1)k}=0$, then there exists $A,B,C\in\C$ with
 $AB\cap \li=(1,\new \tau^{(q-1)i},0)$,  $AC\cap \li=(1,\new \tau^{(q-1)j},0)$, $BC\cap \li=(1,\new \tau^{(q-1)k},0)$ which lie in $\D$.
\end{enumerate}
\end{lemma}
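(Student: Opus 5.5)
Fix coordinates so the conic has a simple parametrisation, and reduce everything to slopes of chords. Since $q$ is even, the configuration is symmetric in $T$ and $N$ up to a coordinate swap. Swapping the first two coordinates maps $\D=\A_\new$ to the Andr\'e set $\{(1,\new^{-1}\tau^{-(q-1)i})\}$, which is again of the same form. The target identity is equivalent to $\sum \tau^{-(q-1)i}=0$ only up to a factor, so rather than relying on the swap I would treat the two cases $T=(1,0,0)$ and $T=(0,1,0)$ separately, each with an explicit conic.

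\textbf{Case $T=(0,1,0)$, $N=(1,0,0)$.} Take $\C=\{(\theta,\theta^2+c\theta+d,1)\}\cup\{T\}$, or more generally a conic tangent to $\li$ at $T$. Its nucleus is the intersection of the tangents. The tangent at the affine point with parameter $\theta$ has direction $(1,2\theta+c)=(1,c)$ since $q$ is even. Hence all tangents pass through $(1,c,0)$, and $N=(1,0,0)$ forces $c=0$. So $\C=\{(\theta,\theta^2+d,1)\}$, and after translating we may take $\C=\{(\theta,\theta^2,1)\}\cup\{T\}$. The chord through the points with parameters $a,b$ has direction $(a-b,a^2-b^2)$, so it meets $\li$ at $(1,a+b,0)$. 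Thus for $A,B,C$ with parameters $a,b,c$, the three points at infinity are $(1,a+b,0)$, $(1,a+c,0)$, $(1,b+c,0)$. Their second coordinates sum to $2(a+b+c)=0$. Writing them as $\new\tau^{(q-1)i}$, $\new\tau^{(q-1)j}$, $\new\tau^{(q-1)k}$ and dividing by $\new$ gives part 1. For part 2, given $x+y+z=0$ with $x=\new\tau^{(q-1)i}$ and so on, solve $a+b=x$, $a+c=y$, $b+c=z$. This system is consistent precisely because $x+y+z=0$ in characteristic $2$: take $a=0$, $b=x$, $c=y$, which gives $b+c=x+y=z$. Distinctness of $i,j,k$ gives $x\ne y$, hence $b\ne c$; also $x,y\ne0$, so $a,b,c$ are distinct. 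This yields three distinct affine points $A,B,C$ of $\C$.

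\textbf{Case $T=(1,0,0)$, $N=(0,1,0)$.} By the same reasoning the conic is $\{(\theta^2,\theta,1)\}\cup\{T\}$, and the chord through parameters $a,b$ meets $\li$ at $(a+b,1,0)=(1,(a+b)^{-1},0)$. Here the slopes are $x^{-1},y^{-1},z^{-1}$ with $x+y+z=0$, $x=a+b$ and so on. This is not the same condition as the one in the statement, so the argument must be adjusted. Set $x=\new^{-1}\tau^{-(q-1)i}$; then $\tau^{-(q-1)i}$ runs over the same index group $\{\tau^{(q-1)i'}:i'\in I\}$. However, the lemma as stated uses the exponent $i$ itself, so I would reread the statement with $\D$ expressed in whichever coordinates apply. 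The cleanest route is to note that the statement is invariant under relabelling: the set $\{\tau^{(q-1)i}\}$ is closed under inversion, so reindexing $i\mapsto -i \bmod (q^{t-1}+\cdots+1)$ converts one case into the other. Equivalently, choose coordinates with $T=(0,1,0)$ without loss of generality, using the involution swapping the first two coordinates, which maps Andr\'e sets with transversal points $T,N$ to Andr\'e sets of the same family.

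\textbf{Main obstacle.} The main difficulty is this coordinate bookkeeping: checking that the reduction to the normal form $\C=\{(\theta,\theta^2,1)\}$ is compatible with the fixed description $\D=\A_\new$, using a collineation that fixes $(1,0,0)$ and $(0,1,0)$ and acts on $\li$ by scaling the second coordinate, which preserves the family $\A_\new$. The algebra itself is just the characteristic-$2$ identity $(a+b)+(a+c)+(b+c)=0$.
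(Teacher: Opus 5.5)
\textbf{Case $T=(0,1,0)$, $N=(1,0,0)$.} This part is correct and is the paper's argument. The paper keeps the general equation $\f x^2+\g z^2+yz=0$, so $AB\cap\li=(1,(a+b)\f,0)$ and no normalisation of the leading coefficient is needed. Your normalisation is harmless anyway, since scaling the second coordinate sends $\A_\new$ to $\A_{\lambda\new}$ without changing any index.

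\textbf{Case $T=(1,0,0)$, $N=(0,1,0)$: the gap.} You correctly find that the chord points are $(1,(a+b)^{-1},0)$. However, both justifications you then offer are false.
\begin{itemize}
\item The condition $\sum\tau^{-(q-1)i}=0$ is not equivalent to $\sum\tau^{(q-1)i}=0$ ``up to a factor''. If $x+y+z=0$ with $x,y,z\neq0$, then $x^{-1}+y^{-1}+z^{-1}=(x^2+xy+y^2)/(xyz)$. This vanishes only if $x/y$ is a primitive cube root of unity, which is impossible for instance when $h$ is odd, since then $\mathbb F_4\not\subseteq\Fqt$.
\item Consequently, neither reindexing $i\mapsto -i$ nor swapping the first two coordinates leaves the statement invariant. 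Each one turns the conclusion $\sum\tau^{(q-1)i}=0$ into the different condition $\sum\tau^{-(q-1)i}=0$.
\end{itemize}
In fact, suppose $\D$ and the three chord points are written literally as $(1,\new\tau^{(q-1)i},0)$. Then part 1 is false in this case whenever a triple as in part 2 exists and $\mathbb F_4\not\subseteq\Fqt$. For example, take $q=8$, $t=5$; Lemma~\ref{011} with $m=2$ supplies such a triple. So no invariance argument can rescue the literal reading.

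\textbf{How the paper handles it.} The paper uses the option you mention in passing but do not carry out. In this case it writes $\D=\{(\new\tau^{(q-1)i},1,0)\,|\,i\in I\}$ and attaches the indices $i,j,k$ to points of $\D$ through this description. With $\C:\f y^2+\g z^2+xz=0$ and $A=(\f a^2+\g,a,1)$, one gets $AB\cap\li=(\f(a+b),1,0)$. The hypotheses then read $\f(a+b)=\new\tau^{(q-1)i}$, $\f(a+c)=\new\tau^{(q-1)j}$ and $\f(b+c)=\new\tau^{(q-1)k}$. Both parts then follow word for word as in the first case.

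You should commit to that reading. You should also state plainly that it is a change of parametrisation of $\D$, so in this case the lemma is a statement about that parametrisation. It is not a consequence of a symmetry of the original formulation.
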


\begin{proof} First suppose that $T=(0,1,0)$ and $N=(1,0,0)$, so $\C$ has equation $\f x^2+\g z^2+yz=0$ for some $\f,\g\in\Fqt$, $\f\neq0$.
By (\ref{e5}), if $\D$ is an Andr\'e set with transversal points $(1,0,0),(0,1,0)$, then
$\D=\{(1,\new\tau^{(q-1)i},0)\,|\,i\in  I   \}$ for some  $\new\in\Fqts$.

For part 1, suppose there exists three     affine points $A,B,C$ in $\C$ such that the three points $AB\cap\li$, $AC\cap\li$, $BC\cap\li$ lie in  $\D$. The points $A,B,C$ have coordinates $A=(a, a^2 \f+\g,1)$, $B=(b, b^2 \f +\g,1)$, $C=(c, c^2 \f +\g,1)$ for some distinct  $a,b,c\in\Fqt$. We compute the following coordinates:
$P_1=AB\cap\li=(1,(a+b) \f,0)$,
$P_2=AC\cap\li=(1,(a+c) \f,0)$, 
$P_3=BC\cap\li=(1,(b+c) \f,0)$.
So if the three distinct points $P_1,P_2,P_3$ lie in $\D$, then there exists distinct $i,j,k\in I $ with $(a+b)\f=\new \tau^{(q-1)i}$,
$(a+c)\f= \new\tau^{(q-1)j}$,
$(b+c)\f= \new \tau^{(q-1)k}$. As $q$ is even, summing these three equations gives  $\new\tau^{(q-1)i}+\new\tau^{(q-1)j}+\new\tau^{(q-1)k}=0$ and so $\tau^{(q-1)i}+\tau^{(q-1)j}+\tau^{(q-1)k}=0$ as required.

 For part 2, suppose there exists distinct $i,j,k\in I $ satisfying $\tau^{(q-1)i}+\tau^{(q-1)j}+\tau^{(q-1)k}=0.$ Let $c\in\Fqt$ and let
\begin{equation}\label{e07}
a=\new\tau^{(q-1)j} \f^{-1}-c,\quad 
b=\new\tau^{(q-1)k} f^{-1} -c. 
\end{equation}
 Then $a,b,c$ are distinct, so the  points with coordinates $C=(c, c^2 \f+\g,1)$, $A=(a, a^2 \f +g,1)$, $B=(b, b^2 \f +g,1)$ are  distinct affine points of $\C$. Computing  coordinates and using   (\ref{e07}) we have   $AC\cap\li=(1,(a+c)\f,0)=(1,\new\tau^{(q-1)j},0)$ and $BC\cap\li=(1,(b+c)\f,0)=(1,\new\tau^{(q-1)k},0)$. Further $AB\cap\li=(1,(a+b)\f,0)=(1,\new \tau^{(q-1)j}+\new\tau^{(q-1)k}-2c,0)=(1,\new\tau^{(q-1)i},0)$ as $q$ even and $\tau^{(q-1)i}+\tau^{(q-1)j}+\tau^{(q-1)k}=0.$

The case where $T=(1,0,0)$, $N=(0,1,0)$ is similar. In this case the conic $\C$ has equation $\f y^2+\g z^2+xz=0$, for some $\f,\g\in\Fqt$, $\f\neq0$, so for example, $A=( y^2 \f+\g,y,1)$. If $\D$ is  an Andr\'e set with transversal points $(1,0,0),(0,1,0)$, then   we can arrange coordinates to write $\D$ as
$\D=\{(\new\tau^{(q-1)i},1,0)\,|\,i\in  I   \}$ for some  $\new\in\Fqts$. The computations are now similar to the above case.
\end{proof}

\begin{lemma}\label{003} In $\PG(2,q^t)$, $q$ even, $q\geq t$, $t\geq3$ prime,  let $\D$ be  an Andr\'e set with transversal points $(1,0,0)$, $(0,1,0)$. Denote the families of $\Fq$-lines contained in $\D$ by $\F_1,\ldots,\F_{t-1}$ and let $m\in\{1,\ldots,t-1\}$.
Suppose $P_1=(1,\new \tau^{(q-1)i},0)$, $P_2=(1,\new \tau^{(q-1)j},0)$, $P_3=(1,\new \tau^{(q-1)k},0)$ are three distinct points in $\D$ with  $i,j,k\in I $ satisfying
$
\tau^{(q-1)i}+\tau^{(q-1)j}+\tau^{(q-1)k}=0.
$
The $\Fq$-line determined by $P_1,P_2,P_3$ belongs to family $\F_m$ if and only if
$q^t-1$ is a factor of both $(q-1)(q^m-2)(j-i)$ and $(q-1)(q^m-2)(k-i)$.
\end{lemma}

\begin{proof} As in (\ref{e5}),  we can write $\D=
\{(1,\new\tau^{(q-1)i},0)\,|\,i\in  I   \}$ for some  $\nu\in\Fqts$.
Let
$P_1=(1,\new\tau^{(q-1)i},0)$, $P_2=(1,\new\tau^{(q-1)j},0)$, $P_3=(1,\new\tau^{(q-1)k},0)$ be three distinct points in $\D$ with (distinct) $i,j,k\in I$ satisfying
$
\tau^{(q-1)i}+\tau^{(q-1)j}+\tau^{(q-1)k}=0$.
Let $\C$ be a conic with nucleus $N$ and tangent to the line $\li$ at the point $T$ with $\{T,N\}=\{(1,0,0),(0,1,0)\}$.
Then
 by Lemma~\ref{002}, there exists points $A$, $B$, $C$ in $\C$ with $P_1=AB\cap \li$,
$P_2=AC\cap \li$ and  $P_3=BC\cap \li$.
We prove the result for the case where $T=(0,1,0)$ and $N=(1,0,0)$.
The case where  $T=(1,0,0)$, $N=(0,1,0)$ is discussed in the last paragraph of this proof. As $T=(0,1,0)$ and $N=(1,0,0)$, $\C$ has equation $\f x^2+\g z^2+yz=0$ for some $\f,\g\in\Fqt$, $\f\neq0$. So $A=(a,a^2 \f  +\g,1)$, $B=(b, b^2 \f+\g,1)$, $C=(c, c^2 \f+\g,1)$ for some distinct  $a,b,c\in\Fqt$.
We compute $P_1=AB\cap\li=(1, (a+b)\f,0)$, and equate with  $P_1=(1,\new\tau^{(q-1)i},0)$ to get
 \begin{equation}\label{e08}
 a+b=\new\tau^{(q-1)i} \f^{-1}.
 \end{equation}
 Similarly  $a+c= \new\tau^{(q-1)j}\f^{-1}$ ,
$b+c=\new \tau^{(q-1)k}\f^{-1}$.

Before continuing, we introduce some coordinate notation. Section \ref{sec2} introduced the coordinate notation where a point $P=(x,y)\in\PG(1,q^t)$ corresponds to the $(t-1)$-space $[P]$ with  coordinates written as $[P]=\{\langle (sx,sy)\rangle_q\,|\,s\in\Fqt^*\}$. We generalise this  to a coordinate notation for affine points in the Bruck-Bose setting of $\PG(2,q^t)$  in $\PG(2t,q)$. An affine point in $\PG(2,q^t)$ has coordinates that can be written as $Q=(x,y,1)$, the corresponding affine point in $\PG(2t,q)$ has coordinates denoted by $[Q]=(\langle x
\rangle_q,\langle y\rangle_q,1)$.

In $\PG(2t,q)$, $A,B,C$ correspond to  the affine points with coordinates $[A]=(\langle a\rangle_q,\langle a^2\f +\g\rangle_q ,1)$, $[B]=(\langle b\rangle_q,\langle  b^2 \f+\g \rangle_q ,1)$, $[C]=(\langle c\rangle_q,\langle  c^2 \f+\g\rangle_q ,1)$.
The point $Q_1=[A][B]\cap\si$ has coordinates $Q_1=(\langle a\rangle_q+\langle b\rangle_q,\langle  a^2 \f +\g\rangle_q+\langle  b^2 \f+\g \rangle_q,0)=(\langle a+ b\rangle_q,\langle (a+ b)^2 \f\rangle_q,0)$ (as $q$ even). Similarly
$Q_2=[A][C]\cap\si=(\langle a+ c\rangle_q,\langle (a+ c)^2 \f\rangle_q,0)$,
$Q_3=[B][C]\cap\si=(\langle b+ c\rangle_q,\langle (b+ c)^2 \f\rangle_q,0)$.

Let $m\in\{1,\ldots,t-1\}$ and consider the Andr\'e replacement net $\D^m$.
The point $Q_1=(\langle a+ b\rangle_q,\langle (a+ b)^2 \f \rangle_q,0)$ lies in the spread element corresponding to the point $P_1=AB\cap\li$. As $P_1\in\D$, the point $Q_1$ lies in a unique $(t-1)$-space of replacement net $\D^m$. That is, recalling
coordinates from (\ref{e6}), there exists $u_1\in\Fqt^*$ with $ \Nm(u_1)=\Nm(\new)$
such that
$Q_1\in X_{m,u_1}=\{(\langle s\rangle_q,\langle s^{q^m}u_1\rangle_q,0)\,|\, s\in\Fqt^*\}$.
So for some $s\in\Fqts$ and $d\in\Fq^*$, we have
$d(\langle a+ b\rangle_q,\langle (a+ b)^2 \f\rangle_q,0)=(\langle s\rangle_q,\langle s^{q^m}u_1\rangle_q,0)$.
Equating the first coordinate we have
$d\langle a+ b\rangle_q=\langle s\rangle_q$,  and so as $d\in\Fq^*$, we have $s=d(a+b)$.
Equating the  second coordinate gives  $d\langle (a+ b)^2 \f \rangle_q=\langle s^{q^m}u_1\rangle_q$ and so $d(a+ b)^2 \f=s^{q^m}u_1$. Substituting $s=d(a+b)$ and rearranging gives $u_1^{-1}=d^{q^m-1}(a+b)^{q^m-2} \f^{-1}$. As $d\in\Fq^*$, we have $d^{q^m-1} =1$ and so $u_1^{-1}=(a+b)^{q^m-2}\f^{-1}$.
Using (\ref{e08}) gives  $u_1^{-1}=\new^{q^m-2}\tau^{i(q-1)(q^m-2)} \f^{1-q^m}$.

Similarly, $Q_2$  lies in a unique element of $\D^m$, so for some $u_2\in\Fqts$ with  $\Nm(u_2)=\Nm(\new)$, we have $Q_2\in X_{m,u_2}$ and similar calculations give
\begin{equation}\label{e031}
u_2^{-1}=\new^{q^m-2}\tau^{j(q-1)(q^m-2)} \f^{1-q^m}.
\end{equation}
Similarly, $Q_3$  lies in a unique element of $\D^m$, so for some $u_3\in\Fqts$ with  $\Nm(u_3)=\Nm(\new)$, we have   $u_3^{-1}=\new^{q^m-2}\tau^{k(q-1)(q^m-2)}\f^{1-q^m}$.

We now prove the forward direction of the result. Suppose
  the unique  $\Fq$-line containing $P_1,P_2,P_3$ (denoted by $b$)  lies in family $\F_m$. Let $\pi$ be  the unique $\Fq$-plane that is secant to $\li$ and contains $A,B,C$, then $\pi$ meets $\li$ in the $\Fq$-line $b$. In $\PG(2t,q)$, $[\pi]$ is a plane that meets $\si$ in a line $\ell$. Further, $\ell$ meets every element of the $(t-1)$-regulus $[b]$, so  $\ell$ is a ruling line of this $(t-1)$-regulus. As $b$ belongs to family $\F_m$, by Result~\ref{M110}, the line $\ell$ belongs to one $(t-1)$-space of the replacement net $\D^m$, that is, $\ell$ is contained $X_{m,u}$ for some $u\in\Fqts$ with $\Nm(u)=\Nm(\new)$.
Further the points  $[A], [B], [C]$ lie in $[\pi]$, so $Q_1,Q_2,Q_3\in\ell$. That is, $Q_1,Q_2,Q_3$ lie in the same $(t-1)$-space of $\D^m$,  and so $u=u_1=u_2=u_3$.
 As $u_1=u_2$, we have $u_1^{-1}=u_2^{-1}$ and so  
$\new^{q^m-2}\tau^{i(q-1)(q^m-2)}\f^{1-q^m}=\new^{q^m-2}\tau^{j(q-1)(q^m-2)}\f^{1-q^m}$
 (see  (\ref{e031})). Hence
 $\tau^{(j-i)(q-1)(q^m-2)}=1$ and so $q^t-1$ is a factor of $(j-i)(q-1)(q^m-2)$. Similarly $u_1=u_3$ implies $q^t-1$ is a factor of $(k-i)(q-1)(q^m-2)$ as required.

We now prove the converse. Suppose that $q^t-1$ is a factor of both  $(j-i)(q-1)(q^m-2)$ and $(k-i)(q-1)(q^m-2)$.
As $q^t-1$ is a factor of  $(j-i)(q-1)(q^m-2)$, there exists $e\in\mathbb Z$ with $e(q^t-1)=(j-i)(q-1)(q^m-2)$ and so $j(q-1)(q^m-2)=i(q-1)(q^m-2)+e(q^t-1)$. From  (\ref{e031}), we have  
$u_2^{-1}=\new^{q^m-2}\tau^{j(q-1)(q^m-2)} \f^{1-q^m}$ and so
$u_2^{-1}=\new^{q^m-2} \tau^{i(q-1)(q^m-2)\f^{1-q^m} +e(q^t-1)}=\new^{q^m-2}\tau^{i(q-1)(q^m-2)}\f^{1-q^m}=u_1^{-1}$.
 Hence $u_1=u_2$. Similarly we have $u_1=u_3$. That is, $Q_1,Q_2,Q_3$ lie in a common $(t-1)$-space $X_{m,u_1}$ of $\D^m$.
The plane $\alpha$ spanned by $[A], [B],[C]$ meets $\si$ in a line $\ell$ which contains the three points $Q_1,Q_2,Q_3$, so $\ell\subset X_{m,u_1}$.
In $\PG(2,q^t)$, the corresponding set of points is denoted  $\B(\alpha)$, that is
$\B(\alpha)=\{P\in\PG(2,q^t)\,|\, [P]\cap\alpha\neq\emptyset\}$.
So $\B(\alpha)$ is the unique $\Fq$-plane that is secant to $\li$ and contains $A,B,C$.
Further $\B(\alpha)\cap\li$ is the $\Fq$-line denoted $\B(\ell)$ where $\B(\ell)=\{P\in\PG(2,q^t)\,|\, [P]\cap\ell\neq\emptyset\}$.
The points  $P_1=AB\cap \li$,  $P_2=AC\cap \li$,  $P_3=BC\cap \li$ lie in $\B(\alpha)$, so $\B(\ell)$ is the unique $\Fq$-line containing the three points $P_1,P_2,P_3$. Finally, as $\ell$ is contained in the element $X_{m,u_1}$ of $\D^m$, it follows from  Result~\ref{M110} that $\B(\ell)$ belongs to family $\F_m$ as required.

The other case where  $T=(1,0,0)$, $N=(0,1,0)$ is similar.  In this case the conic $\C$ has equation $\f y^2+\g z^2+xz=0$, for some $\f,\g\in\Fqt$, $\f\neq0$, so for example, $A=( y^2 \f +\g,y,1)$ If $\D$ is  an Andr\'e set with transversal points $(1,0,0),(0,1,0)$, then   we can arrange coordinates so
$\D=\{(\new\tau^{(q-1)i},1,0)\,|\,i\in  I   \}$ for some  $\new\in\Fqts$. The replacement sets can be written as
$\D^m=\{ X_{m,u}=\{(\langle s^{q^m}u\rangle_q,\langle s\rangle_q,0)\,|\, s\in\Fqt^*\}\ \,|\,u\in\Fqt^*, \Nm(u)=\Nm(\new)\}.$
The computations are now similar to the above case.
\end{proof}


The next two results are used to show that we can replace the condition that $q^t-1$ is a factor of both $(q-1)(q^m-2)(j-i)$ and $(q-1)(q^m-2)(k-i)$ with the condition that $t\bigm| hm-1$.

\begin{lemma}\label{008}
Suppose $q$ even, $t\geq3$ prime and $m\in\{1,\ldots,t-1\}$, then
$\gcd( q^m-2,\hyp)$ is either $1$ or $2^t-1$. Further, $\gcd( q^m-2,\hyp)=2^t-1$ if and only if
$ t\bigm| hm-1.$
\end{lemma}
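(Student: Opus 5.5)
Throughout I write $q=2^h$; the statement presumably uses $h$ in this sense. The plan is to strip the power of $2$ from $q^m-2$ and then use the standard identity $\gcd(2^a-1,2^b-1)=2^{\gcd(a,b)}-1$.

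\textbf{Removing the factor $2$.} Write $q^m-2=2(2^{hm-1}-1)$. The number $\hyp=q^{t-1}+\cdots+q+1$ is a sum of $t$ terms, all even except the last, so it is odd. Hence
$$d:=\gcd\Big(q^m-2,\hyp\Big)=\gcd\Big(2^{hm-1}-1,\hyp\Big).$$
I will assume $hm-1\geq 1$. The degenerate case $q=2$, $m=1$ gives $q^m-2=0$; it is excluded because $q\geq t\geq 3$ in the applications, so $h\geq 2$.

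\textbf{Bounding $d$.} Since $\hyp$ divides $q^t-1=2^{ht}-1$, the number $d$ divides
$$\gcd(2^{hm-1}-1,2^{ht}-1)=2^{g}-1,\qquad g=\gcd(hm-1,ht).$$
Any common divisor of $hm-1$ and $h$ divides $1$, so $\gcd(hm-1,h)=1$ and therefore $g=\gcd(hm-1,t)$. As $t$ is prime, $g\in\{1,t\}$, and $g=t$ exactly when $t\mid hm-1$.
\begin{itemize}
\item If $t\nmid hm-1$, then $g=1$, so $d\mid 2^1-1=1$ and $d=1$.
\item If $t\mid hm-1$, then $g=t$, so $d\mid 2^t-1$.
\end{itemize}

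\textbf{The reverse divisibility (main step).} When $t\mid hm-1$ I still need $2^t-1\mid d$. Since $t\mid 2^{hm-1}-1$ is not the issue here, the content is showing $2^t-1\mid\hyp$.
\begin{enumerate}
\item From $t\mid hm-1$ we get $t\nmid h$; otherwise $t\mid hm-(hm-1)=1$. As $t$ is prime, $\gcd(t,h)=1$.
\item Hence $\gcd(2^t-1,q-1)=\gcd(2^t-1,2^h-1)=2^{\gcd(t,h)}-1=1$.
\item Also $2^t-1$ divides $2^{ht}-1=q^t-1=(q-1)\cdot\hyp$.
\item Since $2^t-1$ is coprime to $q-1$, it divides $\hyp$.
\end{enumerate}
Separately, $t\mid hm-1$ gives $2^t-1\mid 2^{hm-1}-1$. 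Combining the two divisibilities yields $2^t-1\mid d$, so $d=2^t-1$.

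\textbf{Conclusion.} This proves both claims: $d\in\{1,2^t-1\}$, and $d=2^t-1$ if and only if $t\mid hm-1$. The two values are distinct because $2^t-1>1$.

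The step I expect to need the most care is the reverse divisibility, namely that $2^t-1$ actually divides $\hyp$ rather than only $q^t-1$. It relies on the coprimality of $2^t-1$ and $q-1$, which comes from $t\nmid h$, which in turn is forced by $t\mid hm-1$. Everything else is a direct application of the gcd identity for Mersenne-type numbers.
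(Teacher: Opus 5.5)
Your proof is correct and takes essentially the paper's route: remove the factor $2$ using that $\hyp$ is odd, apply $\gcd(2^a-1,2^b-1)=2^{\gcd(a,b)}-1$, and reduce to $\gcd(hm-1,t)$ via $\gcd(hm-1,h)=1$. The only difference is that the paper uses $\gcd(2^{hm-1}-1,q-1)=2^{\gcd(hm-1,h)}-1=1$ to get the equality $\gcd(2^{hm-1}-1,\hyp)=\gcd(2^{hm-1}-1,q^t-1)$ in one step, which makes your separate reverse-divisibility argument unnecessary; your exclusion of $q=2$, $m=1$ is also unnecessary, since the statement holds there ($\gcd(0,2^t-1)=2^t-1$ and $t\mid 0$).
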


\begin{proof}
For $m,n\in\mathbb Z$, it is straightforward to check that
\begin{equation}\label{e007} \gcd(2^m-1,2^n-1)=2^{\gcd( m,n)}-1. \end{equation}
Using this we have
\begin{equation}\label{e007b}
\gcd( 2^{hm-1}-1,2^{h}-1)=2^{\gcd( hm-1,h)}-1=1.
\end{equation}
Hence
   \[
  \begin{split}
   \gcd( q^m-2,\hyp)
  =&\gcd( 2(2^{hm-1}-1),\frac{2^{ht}-1}{2^h-1})\\
  =&\gcd( 2^{hm-1}-1,\frac{2^{ht}-1}{2^h-1})
  \qquad \mbox{ as $\gcd\left( 2,\frac{2^{ht}-1}{2^h-1}\right)=1$}\\
  =&\gcd( 2^{hm-1}-1,2^{ht}-1) 
  \qquad \mbox{  by  (\ref{e007b})}\\
  =&2^{\gcd( hm-1,ht)}-1\qquad\qquad \mbox{by  (\ref{e007})}\\
  =&2^{\gcd( hm-1,t)}-1\qquad \qquad\mbox{ as $\gcd(  hm-1,h)=1$.}\\
  \end{split}
  \]
As $t$ is a prime, either $\gcd( hm-1,t)=1$ or $\gcd( hm-1,t)=t$. So $\gcd( q^m-2,\hyp)$ is 1 or $2^t-1$, the latter case occurs if and only if $t\bigm|hm-1$.
\end{proof}

\begin{lemma}\label{011}
Suppose $q$ even, $t\geq3$ prime and $m\in\{1,\ldots,t-1\}$.
\begin{enumerate}
\item If $q^t-1$ is a factor of $(q-1)(q^m-2)(j-i)$ for some distinct $i,j\in I $, then  $ t\bigm| hm-1$.
\item   If  $ t\bigm| hm-1$, then   there exists
   distinct $i,j,k\in I $ satisfying $\tau^{(q-1)i}+\tau^{(q-1)j}+\tau^{(q-1)k}=0$ and $q^t-1$ is a factor of both $(q-1)(q^m-2)(j-i)$ and $(q-1)(q^m-2)(k-i)$.
   \end{enumerate}
\end{lemma}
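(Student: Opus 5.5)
The plan is to reduce both parts to divisibility by $\hyp$ and then apply Lemma~\ref{008}. Throughout, write $q=2^h$ and use $q^t-1=(q-1)\hyp$. Then $q^t-1$ divides $(q-1)(q^m-2)(j-i)$ if and only if $\hyp$ divides $(q^m-2)(j-i)$.

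For part 1, I suppose $\hyp \mid (q^m-2)(j-i)$ with $i,j\in I$ distinct. Since $I=\{0,1,\ldots,\hyp-1\}$, we have $0<|j-i|<\hyp$, so $\hyp\nmid (j-i)$. It follows that $\gcd(q^m-2,\hyp)\neq 1$; otherwise $\hyp$ would have to divide $j-i$. By Lemma~\ref{008} the gcd is then $2^t-1$, which happens exactly when $t\mid hm-1$.

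For part 2, assume $t\mid hm-1$. Let $g=2^t-1$, which by Lemma~\ref{008} divides $q^m-2$ and $\hyp$, and write $\hyp=gn$. The element $\omega=\tau^{(q-1)n}$ has order $(q^t-1)/((q-1)n)=g$. Since $t\mid ht$, the field $\mathbb{F}_{2^t}$ is a subfield of $\Fqt$, and $\langle\omega\rangle$ is its multiplicative group, the unique subgroup of order $g$.

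As $t\geq3$, the field $\mathbb{F}_{2^t}$ contains an element $x\notin\{0,1\}$. Put $y=1+x$; then $y\notin\{0,1\}$, and $x\neq y$ because $1\neq 0$. Write $x=\omega^r$ and $y=\omega^s$ with $r,s\in\{1,\ldots,g-1\}$ distinct. Now set $i=0$, $j=nr$ and $k=ns$. These are distinct elements of $I$, since $0<nr,ns<ng=\hyp$. They satisfy
\[
\tau^{(q-1)i}+\tau^{(q-1)j}+\tau^{(q-1)k}=1+x+y=0 .
\]
For the divisibility, $(q-1)(q^m-2)(j-i)=(q-1)\,n\,r\,(q^m-2)$. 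Because $g\mid q^m-2$, this is a multiple of $(q-1)ng=q^t-1$. The same argument works for $k-i$.

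The main obstacle is part 2. The key point is recognising that the exponents forced by the divisibility condition sweep out exactly the subfield $\mathbb{F}_{2^t}^*$. Once that is seen, the additive relation $1+x+(1+x)=0$ comes for free from characteristic $2$, and only requires $2^t>2$ so that an $x\notin\{0,1\}$ exists.
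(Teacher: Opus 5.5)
Your proof is correct and follows essentially the same route as the paper. Part 1 is the same gcd argument via Lemma~\ref{008} and the bound $0<|j-i|<\hyp$. Part 2 uses the same generator $\omega=\tau^{(q-1)n}$ of $\mathbb{F}_{2^t}^*$ with $i=0$ and $j,k$ multiples of $n=\hyp/(2^t-1)$. The only cosmetic difference is that the paper takes $x=\omega$ itself, so $j=n$ and $\omega^a=1+\omega$, whereas you allow any $x\in\mathbb{F}_{2^t}\setminus\{0,1\}$.
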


\begin{proof} To prove part 1, suppose $q^t-1$ is a factor of $(q-1)(q^m-2)(j-i)$ for some $i,j\in I $. To show that $ t\bigm| hm-1$, it follows from by Lemma \ref{008} that we only need show that $\gcd( q^m-2,\hyp) \neq 1$. Suppose $\gcd( q^m-2,\hyp)= 1$. As $q^t-1$ is a factor of $(q-1)(q^m-2)(j-i)$, it follows that    $\hyp$ is a factor of $j-i$, which contradicts  $0\le i,j<\hyp$.

To prove part 2, suppose that $ t\bigm| hm-1$. As
 $\tau$ is a multiplicative generator for  $\Fqt^*$,  the subfield  $\mathbb F_{2^t}^*$ has multiplicative generator $\omega=\tau^{\frac{q^t-1}{2^t-1}}$.
 Now $1+\omega\in \mathbb F_{2^t}^*$, so there exists unique
 $a\in \{2,3,\ldots,2^t-2\}$ with $\omega^a=1+\omega$.
As $ t\bigm| hm-1$, it follows from  Lemma~\ref{008} that
 $2^t-1\bigm| \hyp$, so setting
\begin{equation}\label{e09}
  i=0,\quad j=\frac 1{2^t-1}\left(\hyp\right),\quad k=\frac a{2^t-1}\left(\hyp\right),
  \end{equation}
  gives distinct $i,j,k\in I $.
Using (\ref{e09}) we have
  \begin{eqnarray*}
  \tau^{(q-1)i}+\tau^{(q-1)j}+\tau^{(q-1)k}
    &=& 1+\tau^{(q-1)\frac 1{2^t-1}\hyp}+\tau^{(q-1)\frac a{2^t-1}\hyp}\\
    &=&\omega^0+\omega^1+\omega^a.
        \end{eqnarray*}
This equals $0$ by the definition of $a$.
Further, as $t\bigm| hm-1$, by Lemma~\ref{008} we have
$2^t-1\bigm| q^m-2$. By (\ref{e09}), $i,j,k$ are all multiples of $\frac 1{2^t-1}\left(\hyp\right)$, so it follows that both $(q-1)(q^m-2)(j-i)$ and $(q-1)(q^m-2)(k-i)$
are divisible by
  \[
  (q-1)\times (2^t-1)\times \frac 1{2^t-1}\left(\hyp\right)=q^t-1,
  \]
  as required.
\end{proof}

\subsubsection{Tangent conics - inherited arcs}

We now use these technical lemmas to prove the existence of inherited arcs
 in the Andr\'e plane.

\begin{lemma}\label{012}
In $\PG(2,q^t)$, $q$ even, $q\geq t$, $t\geq3$ prime,  let $\C$ be a conic with nucleus $N$ and tangent to the line $\li$ at the point $T$ with $\{T,N\}=\{(1,0,0),(0,1,0)\}$ and let
$\D$ be  an Andr\'e set with transversal points $T,N$ (so coordinates for $\D$ and $\D^m$ are given in equations \textup{(\ref{e5})} and \textup{(\ref{e6})}).
For each $m\in\{1,\ldots,t-1\}$, the set $\D_m(\C)$ is an arc in the Andr\'e plane $\P(\D^m)$ if and only if
$t$ does not divide $h m -1$. 
\end{lemma}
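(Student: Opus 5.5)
The plan is to characterise exactly when three points of $\D_m(\C)$ become collinear in $\P(\D^m)$, and then apply Lemmas~\ref{002}, \ref{003} and \ref{011}. Here $q=2^h$ throughout.

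First I list the lines of $\P(\D^m)$ meeting $\D_m(\C)$. There are three types.
\begin{itemize}
\item A line of $\P(\D^m)$ through an affine point and a point of $\li\setminus\D$ is an ordinary line of $\AG(2,q^t)$. Its point at infinity is $T$, $N$, or a point not in $\D\cup\{T,N\}$.
\item The line $\li$ itself contains no points of $\D_m(\C)$.
\item The remaining lines are the $\D^m$-blocks.
\end{itemize}
For an ordinary line, the affine part of $\C$ meets it in at most two points, because $\C$ is a conic. So the only way $\D_m(\C)$ can fail to be an arc is for three affine points $A,B,C\in\C$ to lie in a common $\D^m$-block. Three points in a block are collinear in $\PG(2,q^t)$ only if they lie on a line through a point of $\D$, which is impossible for three points of the conic. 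Hence $A,B,C$ are non-collinear, and Result~\ref{001} applies.

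By Result~\ref{001}, $A,B,C$ lie in a common $\D^m$-block if and only if $AB\cap\li$, $AC\cap\li$, $BC\cap\li$ lie in $\D$ and the $\Fq$-line they determine belongs to $\F_m$.

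\textbf{Forward direction (contrapositive).} Suppose $\D_m(\C)$ is not an arc. Then such $A,B,C$ exist, and we write the three points at infinity as $(1,\new\tau^{(q-1)i},0)$ and so on, with $i,j,k$ distinct. These points are distinct, since $A,B,C$ are non-collinear and no two of the three chords share a point at infinity.
\begin{itemize}
\item Lemma~\ref{002}(1) gives $\tau^{(q-1)i}+\tau^{(q-1)j}+\tau^{(q-1)k}=0$.
\item Lemma~\ref{003} then gives $q^t-1 \mid (q-1)(q^m-2)(j-i)$.
\item Lemma~\ref{011}(1) then gives $t\mid hm-1$.
\end{itemize}
So if $t\nmid hm-1$, then $\D_m(\C)$ is an arc.

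One point needs care. When $T=(1,0,0)$ the coordinates of $\D$ are written as $(\new\tau^{(q-1)i},1,0)$, and Lemmas~\ref{002} and \ref{003} explicitly cover this case "similarly", so I will invoke them as stated.

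\textbf{Converse.} Suppose $t\mid hm-1$.
\begin{itemize}
\item Lemma~\ref{011}(2) supplies distinct $i,j,k\in I$ with $\tau^{(q-1)i}+\tau^{(q-1)j}+\tau^{(q-1)k}=0$ and $q^t-1$ dividing both $(q-1)(q^m-2)(j-i)$ and $(q-1)(q^m-2)(k-i)$.
\item Lemma~\ref{002}(2) gives $A,B,C\in\C$, necessarily affine, whose chords meet $\li$ in the three corresponding points of $\D$.
\item Lemma~\ref{003} shows that the $\Fq$-line through these three points lies in $\F_m$.
\item Result~\ref{001} then puts $A,B,C$ in a common $\D^m$-block.
\end{itemize}
That block is a line of $\P(\D^m)$ containing three points of $\D_m(\C)$, so $\D_m(\C)$ is not an arc.

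The main obstacle is the first step: making sure no other line of $\P(\D^m)$ can meet $\D_m(\C)$ in three points. This needs a careful check that lines through $T$ and $N$ (which are not in $\D$) are unchanged ordinary lines, and that $A,B,C$ are non-collinear so that Result~\ref{001} applies. The hypothesis $q\ge t$ is used only through Results~\ref{M110} and \ref{001}.
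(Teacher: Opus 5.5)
Your proof is correct and follows the paper's argument step for step. The forward direction goes by contrapositive via Result~\ref{001}, Lemma~\ref{002}(1), Lemma~\ref{003} and Lemma~\ref{011}(1), and the converse via Lemma~\ref{011}(2), Lemma~\ref{002}(2), Lemma~\ref{003} and Result~\ref{001}. Your added checks make explicit what the paper leaves implicit: ordinary lines meet $\D_m(\C)$ in at most two points, the three points $A,B,C$ are non-collinear so Result~\ref{001} applies, and the three points at infinity are distinct.
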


\begin{proof} We prove the contrapositive, namely we show that $\D_m(\C)$ is not an arc of $\P(\D^m)$ if and only if $t\bigm| hm-1$.
Suppose $\D_m(\C)$ is not an arc of $\P(\D^m)$. Then there are three affine points $A,B,C\in\C$ with $A,B,C$ collinear in $\P(\D^m)$, so $A,B,C$ lie  in a common $\D^m$-block. It follows from Result~\ref{001} that  the $\Fq$-line determined by the three points $P_1=AB\cap\li$, $P_2=AC\cap\li$, $P_3=BC\cap\li$ belongs to family $\F_m$. In particular this means that $P_1,P_2,P_3\in\D$, and so $P_1=(1,\tau^{(q-1)i},0)$,
$P_2=(1,\tau^{(q-1)j},0)$,
$P_3=(1,\tau^{(q-1)k},0)$ for distinct $i,j,k\in I $.
By Lemma~\ref{002}, $i,j,k$ satisfy
$\tau^{(q-1)i}+\tau^{(q-1)j}+\tau^{(q-1)k}=0$. It follows from Lemma~\ref{003} that $q^t-1$ is a factor of both $(q-1)(q^m-2)(j-i)$ and $(q-1)(q^m-2)(k-i)$. Hence by Lemma~\ref{011}, we have $t\bigm| hm-1$.

Conversely suppose $t\bigm| hm-1$. By Lemma~\ref{011}, there exists distinct $i,j,k\in I $ satisfying
$\tau^{(q-1)i}+\tau^{(q-1)j}+\tau^{(q-1)k}=0$ and $q^t-1$ is a factor of both $(q-1)(q^m-2)(j-i)$ and $(q-1)(q^m-2)(k-i)$.
By Lemma~\ref{003}, the
$\Fq$-line determined by the three points $P_1=(1,\tau^{(q-1)i},0)$,
$P_2=(1,\tau^{(q-1)j},0)$,
$P_3=(1,\tau^{(q-1)k},0)$ belongs to family $\F_m$.
By Lemma~\ref{002}, there exists three points  $A,B,C\in\C$ with $AB\cap\li=P_1$, $AC\cap\li=P_2$, $BC\cap\li=P_3$. By Result~\ref{001}, the three points $A,B,C$ lie in a common $\D^m$-block. Hence $A,B,C$ are collinear in $\P(\D^m)$ and so
$\D_m(\C)$ is not an arc.
\end{proof}

\begin{theorem}\label{013}
In $\PG(2,q^t)$, $q$ even, $q\geq t$, $t\geq3$ prime,  let $\C$ be a conic with nucleus $N$ and tangent to the line $\li$ at the point $T$ with $\{T,N\}=\{(1,0,0),(0,1,0)\}$ and let
$\D=\{(1,\new\tau^{(q-1)i})\,|\,i\in  I   \}$   (for some $\new\in\Fqt^*$) be  an Andr\'e set with transversal points $T,N$. Perform an Andr\'e replacement using the Andr\'e replacement net $\D^m=\{ X_{m,u}=\{\langle(s,s^{q^m}u)\rangle_q\,|\, s\in\Fqt^*\}\ \,|\,u\in\Fqt^*, \Nm(u)=\Nm(\new)\}$; and let $\D_m(\C)$ denote the set of points in $\P(\D^m)$ corresponding to the affine points of $\C$.
\begin{enumerate}
\item If $h\equiv 0\pmod{t}$, then for each $m\in\{1,\ldots,t-1\}$, $\D_m(\C)$ is an arc which can be completed to an oval in the Andr\'e plane $\P(\D^m)$.
\item If $h\not\equiv 0\pmod{t}$, then  there is a unique $n\in\{1,\ldots,t-1\}$ for which $t$  divides $h n -1 $.  For this $n$  the following holds:
\begin{enumerate}
\item
$\D_n(\C)$ is not an arc in the Andr\'e plane $\P(\D^n)$;
\item for each $m\in\{1,\ldots,t-1\}\setminus\{n\}$,
$\D_m(\C)$ is an arc  which can be completed to an oval in the Andr\'e plane $\P(\D^m)$.
\end{enumerate}
\end{enumerate}
\end{theorem}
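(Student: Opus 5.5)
The plan is to combine Lemma~\ref{012} with two extra ingredients: elementary arithmetic modulo the prime $t$, and a completion argument showing that an inherited arc of size $q^t$ extends to an oval. Here $q=2^h$, as implicitly used in Lemma~\ref{008}.

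\emph{Arithmetic.} If $h\equiv 0\pmod t$, then $hm-1\equiv -1\pmod t$ for every $m$. So $t\nmid hm-1$ for all $m\in\{1,\ldots,t-1\}$, and Lemma~\ref{012} gives that $\D_m(\C)$ is an arc for every $m$. If $h\not\equiv 0\pmod t$, then $h$ is invertible modulo the prime $t$. Hence the congruence $hn\equiv 1\pmod t$ has exactly one solution $n$ modulo $t$. This solution is nonzero, so it has a unique representative in $\{1,\ldots,t-1\}$. Lemma~\ref{012} then gives part 2(a) for this $n$, and shows that $\D_m(\C)$ is an arc for every other $m$.

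\emph{Completion to an oval.} The set $\D_m(\C)$ consists of the $q^t$ affine points of $\C$. An oval in a projective plane of order $q^t$ has $q^t+1$ points, so one more point is needed. The natural candidate is the point $T$ on $\li$, which is a point of $\P(\D^m)$ because $T\notin\D$: it is a transversal point. To show $\D_m(\C)\cup\{T\}$ is an arc, take two affine points $A,B$ of $\C$ and consider the line of $\P(\D^m)$ through them. We must show this line does not pass through $T$.

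If $AB\cap\li\notin\D$, then the line $AB$ is unchanged in $\P(\D^m)$. It meets $\li$ at $AB\cap\li\neq T$, since $AB$ is a secant of $\C$ and $\li$ is the tangent at $T$. If instead $A,B$ lie in a $\D^m$-block, that block meets $\li$ in a point of the replacement net, again different from $T$. Finally, the lines of $\P(\D^m)$ through $T$ are the original lines of $\PG(2,q^t)$ through $T$, and each such affine line meets $\C$ in at most one affine point, because it is tangent to $\C$ at $T$ or a secant through $T$. Together these cases show that $\D_m(\C)\cup\{T\}$ is a $(q^t+1)$-arc, that is, an oval.

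\emph{Alternative completion.} One could instead use the general fact that in a projective plane of even order, every $q^t$-arc ($(n)$-arc with $n$ the order) extends uniquely to an oval. That fact is standard for even order via the counting argument: each point of the arc lies on a unique tangent, and the tangents are concurrent. Using it would give the completion immediately. I would present the direct argument with $T$ and mention this fact as a remark.

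I expect the only subtle step to be checking that the lines through $T$ in $\P(\D^m)$ are genuinely unchanged, i.e.\ that $T\notin\D$. This is exactly the hypothesis that $T$ is a transversal point of $\D$.
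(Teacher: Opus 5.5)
Your proof is correct and is essentially the paper's: Lemma~\ref{012} together with the invertibility of $h$ modulo the prime $t$, and the oval obtained by adjoining $T$, whose lines in $\P(\D^m)$ are unchanged because $T\notin\D$. Drop the ``alternative completion'' remark, however: the concurrent-tangents argument is for $(n+1)$-arcs, whereas each point of a $q^t$-arc lies on two tangents, and the completion is not unique here, since $N\notin\D$ works equally well and $\D_m(\C)\cup\{T,N\}$ is in fact a hyperoval of $\P(\D^m)$.
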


\begin{proof}
 Suppose $h\equiv 0\pmod{t}$, that is,
$t\bigm| h$. Then $t$ does not divide $hm-1$ for any $m\in\{1,\ldots,t-1\}$. So by Lemma~\ref{012}, $\D_m(\C)$ is an arc in the Andr\'e plane $\P(\D^m)$ for each $m\in\{1,\ldots,t-1\}$. As the point $T$ does not lie in $\D$, all the lines of $\P(\D^m)$ through $T$ are also lines of $\PG(2,q^t)$, so $T$  does not lie on a $3$-secant of $\D_m(\C)$. Hence the set of points $\D_m(\C)\cup T$ form an oval of $\P(\D^m)$.

 Suppose $h\not\equiv 0\pmod{t}$,
 then $\gcd( h,t)=1$ as $t$ is a prime.
Hence for $m\in\{1,\ldots,t-1\}$, we have $\gcd(h m,t)=1$, and so $h m\not\equiv0\pmod{t}$.
Next suppose there exists distinct $m_1,m_2\in\{1,\ldots,t-1\}$ with  $h m_1\equiv h m_2 \pmod t$. Then
 $ t \bigm|h m_1-h m_2 $, (since
$\gcd( t,h)=1$). 
As $0<m_1,m_2<t$, it follows that $m_1=m_2$. That is, we have shown that the set
 $\{h m\pmod t\,|\,m=1,\ldots,t-1\}$ equals the set $\{1,\ldots,t-1\}$. Hence there is a unique $n\in\{1,\ldots,t-1\}$ with $h n\equiv 1\pmod{t}$,  that is,  there is a unique $n\in\{1,\ldots,t-1\}$ for which $t$  divides $h n-1$.

It follows from  Lemma~\ref{012} that $\D_m(\C)$ is an arc in the Andr\'e plane $\P(\D^m)$ for each $m\in\{1,\ldots,t-1\}\setminus\{n\}$, and $\D_n(\C)$ is not an arc in $\P(\D^n)$. Moreover we can complete the arcs to ovals by adding the point $T$.
\end{proof}

As all Andr\'e sets are projectively equivalent, we have the following general result.

\begin{theorem}\label{014}
In $\PG(2,q^t)$, $q$ even, $q\geq t$, $t\geq3$ prime,  let $\C$ be a conic with nucleus $N$ and tangent to the line $\li$ at the point $T$ and let
$\D$ be  an Andr\'e set with transversal points $T,N$.  Perform an Andr\'e replacement using the Andr\'e replacement net $\D^m$; and let $\D_m(\C)$ denote the set of points in $\P(\D^m)$ corresponding to the affine points of $\C$.
\begin{enumerate}
\item If $h\equiv 0\pmod{t}$, then for each $m\in\{1,\ldots,t-1\}$, $\D_m(\C)$ is an arc which can be completed to an oval in the Andr\'e plane $\P(\D^m)$.
\item If $h\not\equiv 0\pmod{t}$, then  there is a unique $n\in\{1,\ldots,t-1\}$ for which $\D_n(\C)$ is not an arc in the Andr\'e plane $\P(\D^n)$; for  the remaining $t-2$ values $m\in\{1,\ldots,t-1\}\setminus\{n\}$,
$\D_m(\C)$ is an arc  which can be completed to an oval in the Andr\'e plane $\P(\D^m)$.
\end{enumerate}
\end{theorem}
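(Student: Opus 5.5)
Theorem~\ref{014} follows from Theorem~\ref{013} by a projective change of coordinates. The plan is to choose a collineation of $\PG(2,q^t)$ that puts $T,N$ at $(1,0,0),(0,1,0)$ and fixes $\li$. Then we check that this collineation carries $\D$ to one of the sets $\A_\new$ of (\ref{e5}), and carries the $t-1$ replacement nets of $\D$ onto the $t-1$ replacement nets $\D^m$ of (\ref{e6}) in some order. Since the conclusion of Theorem~\ref{014} does not refer to the labelling of the nets beyond counting them, a permutation of labels is harmless.

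First, since $T\neq N$ are points of $\li$, there is a collineation $\phi\in\mathrm{PGL}(3,q^t)$ fixing $\li$ with $\phi(T),\phi(N)\in\{(1,0,0),(0,1,0)\}$. Then $\phi(\C)$ is a conic tangent to $\li$ at $\phi(T)$ with nucleus $\phi(N)$. The image $\phi(\D)$ is an Andr\'e set with transversal points $(1,0,0),(0,1,0)$, because $\phi$ conjugates the Singer group fixing $T,N$ to the one fixing $\phi(T),\phi(N)$. By Section~\ref{sec2} it is therefore one of the sets $\A_\delta$, i.e.\ of the form (\ref{e5}) for some $\new$.

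Next, $\phi$ lifts to a collineation $[\phi]$ of $\PG(2t,q)$ in the Bruck-Bose representation. This lift fixes $\si$ and the regular spread $\S$, and maps affine $t$-spaces to affine $t$-spaces. Hence it maps the Andr\'e net $[\D]$ to $[\phi(\D)]$, and it maps the set of its $t-1$ replacement nets bijectively onto the set of replacement nets of $[\phi(\D)]$. This is the step needing most care, and I would argue it intrinsically. A replacement net of an Andr\'e net is a set of $\hyp$ pairwise disjoint $(t-1)$-spaces covering the same points as $[\D]$, different from $[\D]$. In the prime case these are exactly the $t-1$ nets $\D^1,\dots,\D^{t-1}$, as in \cite{qoddpaper}, so any collineation of $\PG(2t-1,q)$ mapping one Andr\'e net to another maps replacement nets to replacement nets.

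Alternatively, one can argue via Result~\ref{M110}: the families $\F_i$ are determined by $\D$, and $\phi$ maps $\Fq$-lines to $\Fq$-lines. Consequently $[\phi]$ induces an isomorphism from the Andr\'e plane $\P(\D^m)$ onto $\P(\phi(\D)^{\sigma(m)})$ for some permutation $\sigma$ of $\{1,\dots,t-1\}$. This isomorphism maps $\D_m(\C)$ onto $\phi(\D)_{\sigma(m)}(\phi(\C))$, and $T$ onto $\phi(T)$.

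Finally, apply Theorem~\ref{013} to $\phi(\C)$ and $\phi(\D)$. If $t\mid h$, every $\phi(\D)_{m'}(\phi(\C))$ is an arc completed to an oval by adding $\phi(T)$. Pulling back through the isomorphisms, every $\D_m(\C)\cup\{T\}$ is an oval of $\P(\D^m)$. If $t\nmid h$, exactly one index $n'$ gives a non-arc, and the others give ovals. Then $n=\sigma^{-1}(n')$ is the unique bad index for $\D$, and the remaining $t-2$ values of $m$ give arcs completed to ovals. Being an arc or an oval is preserved under plane isomorphism, which completes the argument.
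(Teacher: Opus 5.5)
Your proposal is correct and uses the same reduction as the paper. The paper justifies Theorem~\ref{014} in one line (all Andr\'e sets are projectively equivalent, so Theorem~\ref{013} applies), and you supply the details it omits: the collineation fixing $\li$, its Bruck--Bose lift carrying blocks to blocks and replacement nets to replacement nets, and the fact that it sends $T$ to $\phi(T)$. Your remark that the net labels may be permuted is apt (for example, interchanging the two coordinates sends $X_{m,u}$ to $X_{t-m,u^{-q^{t-m}}}$), and it is why the statement asserts only that a unique bad index $n$ exists rather than naming it.
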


\section{Conclusion}\label{sec-con}

Theorem~\ref{thm000} showed that a conic of $\PG(2,q^t)$, $q$ even, which is secant or exterior to $\li$ never inherits to an arc of an Andr\'e plane. The tangent conic case is more complex. If a conic $\C$ of $\PG(2,q^t)$, $q$ even, is tangent to $\li$ at point $T$ with nucleus $N$, there are a number of cases to consider. Theorem~\ref{201}
shows that  when $T,N\in\D$, $\C$ does not inherit to an arc. Theorem~\ref{014} looked at a case where $\C$ does inherit to an arc (namely when $T,N$ are the transversal points  of the Andr\'e set $\D$).

The remaining cases to consider include: the cases where exactly one of $T,N$ lies in $\D$, and the other either is, or is not, a transversal point of $\D$; and  the case where $T,N\notin\D$, with either zero or one of $T,N$ being a transversal point of $\D$.
Computer searches when $t=3$ for small $q$ using Magma~\cite{magma} tested all of these cases and did not find any examples where the conic inherited to an arc.

In particular,  computer results suggest that  the inherited arc described in Result~\ref{120e}(3b) does not generalise to the case $t= 3$.
It seems likely  that there are no other cases which lead to inherited arcs and the following conjecture encompasses this in the case $t=3$, that is, in $\PG(2,q^3)$.

\begin{conjecture}
In $\PG(2,q^3)$, $q$ even, $q>2$, let $\C$ be a conic with nucleus $N$ and tangent to the line $\li$ at the point $T$ and let
$\D$ be  an Andr\'e set of $\li$ with transversal points $E,F$.
If $|\{T,N\}\cap\{E,F\}|<2$, then
$\D_m(\C)$ is not an arc in the Andr\'e plane $\P(\D^m)$ for any $m\in\{1,2\}$.
\end{conjecture}


Another interesting question arises from a conjecture in \cite[p556]{GlynnStein} that considers the arc constructed in Result~\ref{120e}(3c); they conjecture that all ovals through two conjugate points in the Hall plane of order $q=2^{2k}$ derive from conics in the Desarguesian plane. An interesting generalisation
is to an Andr\'e plane $\P(\D^m)$ of order $q^t$, $q$ even, $t\geq3$ prime that is obtained from the Desarguesian plane $\PG(2,q^t)$ using one Andr\'e plane net replacement. We can ask whether an oval in $\P(\D^m)$ that contains  the two transversal points on $\li$ is inherited from a conic of $\PG(2,q^t)$.

\end{document}